\theoremstyle{plain}
    \newtheorem{thm}{Theorem}[section]
     \newtheorem{conjecture}[thm]{Conjecture}
    \newtheorem{corollary}[thm]{Corollary}
    \newtheorem{proposition}[thm]{Proposition}
    \newtheorem{question}[thm]{Question}
    \newtheorem{theorem}[thm]{Theorem}
\theoremstyle{definition}
    \newtheorem*{notation*}{Notation and Terminology}
    \newtheorem{remark}[thm]{Remark}
\theoremstyle{remark}
\newcommand{\arxiv}[1]{\href{https://arxiv.org/abs/#1}{{\tt arXiv:#1}}}
\newcommand{\bP}{\mathbb{P}}
\newcommand{\bQ}{\mathbb{Q}}
\newcommand{\bC}{\mathbb{C}}
\newcommand{\bZ}{\mathbb{Z}}
\newcommand{\mstriangle}[1]{
\begin{tikzpicture}[x=0.3cm,y=0.3cm]
\draw (-0.4,-0.433) -- (1.4,-0.433);
\draw (-0.2,-0.7794) -- (0.7,0.7794);
\draw (1.2,-0.7794) -- (0.3,0.7794);
\end{tikzpicture}
}
\newcommand{\mssharp}[1]{
\begin{tikzpicture}[x=0.3cm,y=0.3cm]
\draw (-0.8,-0.5) -- (0.8,-0.5);
\draw (-0.8,0.5) -- (0.8,0.5);
\draw (-0.5,-0.8) -- (-0.5,0.8);
\draw (0.5,-0.8) -- (0.5,0.8);
\end{tikzpicture}
}
\newcommand{\Rmnum}[1]{\expandafter\@slowromancap\romannumeral #1@}
\begin{document}

\title[Integral Zariski decompositions]{Integral Zariski decompositions on smooth projective surfaces}

\author{Sichen Li}
\address{
School of Mathematics, East China University of Science and Technology, Shanghai 200237, P. R. China}
\email{\href{mailto:sichenli@ecust.edu.cn}{sichenli@ecust.edu.cn}}
\begin{abstract}
In this paper, we characterize smooth projective surfaces on which every integral pseudoeffective divisor has an integral Zariski decomposition.
\end{abstract}
\keywords{Bounded negativity conjecture,  SHGH conjecture, integral Zariski decomposition, numerical characterization}
\thanks{The research is supported by the Shanghai Sailing Program (No. 23YF1409300). }
\subjclass[2010]{14C20}

\maketitle
\section{Introduction}
The bounded negativity conjecture (BNC for short)  is one of the most intriguing problems in the theory of projective surfaces and can be formulated as follows:
\begin{conjecture}	
\cite[Conjecture 1.1]{Bauer et al 2013}
\label{BNC}
For a smooth projective surface $X$ over $\bC$ there exists an integer $b(X)\ge0$ such that $C^2\ge-b(X)$ for every curve $C\subseteq X$.
\end{conjecture}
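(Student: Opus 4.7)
The statement above is the Bounded Negativity Conjecture, which is a widely open problem in surface theory: no complete proof is known, and the conjecture has been verified only for rather restricted classes of surfaces. Any honest ``proof proposal'' must therefore be understood as an outline of reductions and partial attacks rather than a route to a full theorem. The abstract of the paper is consistent with this reading: the announced contribution is a characterization of integral Zariski decompositions on certain smooth projective surfaces, addressing a related problem of Harbourne, Pokora and Tutaj-Gasi\'nska, not BNC in its full generality.

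The plan I would follow is to divide by Kodaira dimension $\kappa(X)$. When $\kappa(X)\ge 0$, I would pass to a minimal model $X_{\min}$, on which $K_{X_{\min}}$ is nef, and exploit the adjunction formula $C^{2}=2p_{a}(C)-2-K_{X_{\min}}\cdot C$ for an irreducible curve $C$. Bounding $C^{2}$ from below then reduces to bounding $K_{X_{\min}}\cdot C$ from above for those numerical classes that can contain a negative curve; by the Hodge index theorem such classes are rigid, which gives some hope of control. Blowing down $(-1)$-curves shows the conjecture is compatible with the birational reduction to $X_{\min}$, so it suffices to treat minimal models case by case (K3, Enriques, abelian, bielliptic, properly elliptic, general type). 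When $\kappa(X)=-\infty$, $X$ is birational to $\bP^{2}$ or to a Hirzebruch surface, and the problem is transferred to negative curves on blow-ups of these.

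The principal obstacle lies precisely in this last, rational case: BNC is open already for blow-ups of $\bP^{2}$ at ten or more very general points, where it encodes a form of the SHGH conjecture on plane linear systems with assigned base points. The partial results in the literature proceed via reduction modulo $p$ and Frobenius pullback, via Miyaoka-type inequalities obtained from the Bogomolov--Miyaoka--Yau theorem, or, as in the present paper, via a fine study of Zariski decompositions. In that last direction the concrete plan is: first identify which big $\bQ$-divisors on $X$ admit an \emph{integral} Zariski decomposition (i.e.\ one whose negative part has integer coefficients); then use this characterization to pin down the supports of negative parts and hence the numerical types of the negative curves that can appear; and finally convert the resulting combinatorial description of negative curves into a uniform lower bound $C^{2}\ge -b(X)$ on those classes of surfaces where the characterization is sharp enough. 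The hardest step, both for BNC itself and inside the Zariski-decomposition approach, is the last one: turning a qualitative structure theorem for negative curves into an effective numerical bound.
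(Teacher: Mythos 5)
The statement you were given is the Bounded Negativity Conjecture itself, which the paper merely records as \emph{Conjecture}~1.1, cited from Bauer et al.; the paper contains no proof of it, and none is known. Your proposal correctly recognizes this and honestly presents a survey of attack strategies rather than a proof, which is the only defensible response. So there is nothing in the paper to compare your argument against, and your text should not be read as closing the statement.

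One concrete error in your outline is worth flagging: you assert that blowing down $(-1)$-curves shows BNC is ``compatible with the birational reduction to $X_{\min}$, so it suffices to treat minimal models case by case.'' This reduction does not work and is in fact a well-known open problem. If $\pi\colon X\to X_{\min}$ contracts exceptional curves, a negative curve $C$ on $X$ maps to a curve $\pi(C)$ on $X_{\min}$ with $\pi(C)^2\ge C^2$, and there is no known way to bound $C^2$ from below in terms of a bound valid on $X_{\min}$: the multiplicities of $C$ at the blown-up points are not controlled. Whether bounded negativity is a birational invariant is explicitly raised as an open question in the very reference you are citing, and the case of $\bP^2$ blown up at ten or more very general points (which you correctly identify as the hard rational case) is precisely an instance where the ``minimal model'' $\bP^2$ trivially satisfies BNC while the blow-up is open. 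Your adjunction-based remarks for minimal models of nonnegative Kodaira dimension also only yield $C^2\ge -2-K_{X_{\min}}\cdot C$, and bounding $K_{X_{\min}}\cdot C$ for negative curves is itself not automatic from the Hodge index theorem. None of this undermines your overall (correct) conclusion that the conjecture remains open; it just means the proposed reduction scheme is not a valid first step.
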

The geometric significance of Zariski-Fujita decomposition \cite{Fujita79, Zariski62} lies in the fact that, given an integral pseudo-effective divisor $D$ on  $X$ with Zariski decomposition $D=P+N$ such that they are  $\bQ$-divisors,  $P$ is nef, and one has for every sufficiently divisible integer $m\ge1$ the equality
$$
H^0(X,\mathcal O_X(mD))=H^0(\mathcal O_X(mP)).
$$ 
We say $X$ has {\it bounded Zariski denominators} if there exists such $d(X)=m$ which is independent of the choice of $D$.
In fact, $d(X)$ is defined as the maximum of the denominators appearing in the Zariski decomposition of all integral pseudo-effective divisor on $X$.

A celebrated theorem of Bauer, Pokora, and Schmitz \cite{BPS17} says that $X$ has bounded Zariski denominators if and only if $X$ satisfies the BNC as follows.
\begin{theorem}
For a smooth projective surface $X$ over an algebraically closed field the following two statements are equivalent:
\begin{itemize}
\item[(1)] $X$ has bounded Zariski denominators.
\item[(2)]  $X$ satisfies the BNC.
\end{itemize}
\end{theorem}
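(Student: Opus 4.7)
The plan is to reduce both implications to a uniform bound on $|\det M|$, where $M$ ranges over intersection matrices of negative-definite configurations $C_1, \ldots, C_r$ of irreducible curves on $X$. The link is as follows: given a pseudo-effective integral $D$ with Zariski decomposition $D = P + N$, write $N = \sum_i a_i C_i$; the relations $P \cdot C_j = 0$ yield the linear system $\sum_i a_i (C_i \cdot C_j) = D \cdot C_j$ with integer negative-definite coefficient matrix $M = (C_i \cdot C_j)$. Cramer's rule then expresses each $a_i$ as an integer divided by $\det M$, so the Zariski denominator of $D$ divides $|\det M|$.

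\textbf{From BNC to bounded denominators.} Assume BNC with constant $b(X)$. Any negative-definite configuration has $r \le \rho(X)$ components by numerical independence, and the diagonal of $-M$ is bounded by $b(X)$. Hadamard's inequality applied to the positive-definite matrix $-M$ yields
\[
|\det M| \;=\; \det(-M) \;\le\; \prod_{i=1}^r (-C_i^2) \;\le\; b(X)^{\rho(X)},
\]
a bound independent of the configuration. Only finitely many values of $|\det M|$ occur, so their least common multiple is a uniform Zariski denominator.

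\textbf{From bounded denominators to BNC.} Assume $m$ is a uniform Zariski denominator. For an irreducible curve $C$ with $C^2 = -k < 0$, I would construct an integral pseudo-effective divisor whose Zariski denominator is essentially $k$. Taking $D = H + C$ for an ample $H$ with $H \cdot C < k$, we have $D \cdot C < 0$, so $C \subseteq \Supp N(D)$. A direct computation gives $D = P + \alpha C$ with $\alpha = (k - H\cdot C)/k$ and $P = H + (H\cdot C/k)C$; nefness of $P$ follows since $P \cdot C = 0$ and $P \cdot C' > 0$ for every other irreducible curve $C'$. The denominator of $\alpha$ in lowest terms equals $k/\gcd(H\cdot C, k)$, and varying $H$ through ample classes shows that this denominator can be made equal to $k/d(C)$, where $d(C)$ is the positive generator of $\{L \cdot C : L \in \NS(X)\}$. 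Since $d(C)$ divides $k$, bounded $k/d(C)$ combined with uniform control of $d(C)$ yields bounded $k$, i.e., BNC.

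\textbf{Main obstacle.} The delicate step is the final upgrade from bounded $k/d(C)$ to bounded $k = |C^2|$. This reflects the interaction between the intersection form on $\NS(X)$ and divisibility of classes, and I would handle it by enlarging to reducible configurations, exploiting the off-diagonal freedom in $M$ to realize Zariski denominators that directly detect $|C^2|$ at the level of several curves simultaneously. This arithmetic step is the genuine content of the theorem; the rest is bookkeeping around Cramer's rule and Hadamard's inequality.
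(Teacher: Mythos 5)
First, a remark on the comparison itself: the paper does not prove this statement. It is quoted as background from Bauer--Pokora--Schmitz \cite{BPS17}, so there is no in-paper argument to measure yours against. Judged on its own, your proof of the implication (2)$\Rightarrow$(1) is correct and complete: a negative-definite configuration has at most $\rho(X)$ members, Cramer's rule shows the Zariski denominator of $D$ divides $\det(-M)$, and the Hadamard--Fischer bound $\det(-M)\le\prod_i(-C_i^2)\le b(X)^{\rho(X)}$ leaves only finitely many possible determinants, whose least common multiple is a uniform denominator. This is the standard argument (it is also how \cite{HPT15} obtains the case $C^2\ge-1$).

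The converse implication (1)$\Rightarrow$(2) has a genuine gap, which you yourself flag. Two points. (a) A fixable slip: an ample $H$ with $H\cdot C<k$ need not exist, since the minimal ample degree of $C$ can exceed $k=-C^2$; replace $D=H+C$ by $D=nC+H$ with $n\gg0$, which has the same nef part $P=H+\frac{H\cdot C}{k}C$ and negative part $\bigl(n-\frac{H\cdot C}{k}\bigr)C$, hence denominator $k/\gcd(H\cdot C,k)$. (b) The essential problem: even after optimizing over $H$, this construction only yields $k\le m\cdot d(C)$ with $d(C)=\gcd\{L\cdot C: L\in\NS(X)\}$, and $d(C)$ is not a priori bounded on a fixed surface. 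Worse, when $d(C)=k$ the coefficient $-(D\cdot C)/k$ of $C$ in a one-component negative part is automatically an integer, so single-curve configurations carry no information about $k$ whatsoever; the K3 example recalled in this very paper ($\rho(X)=2$, $d(X)=1$, with $(-2)$-curves $C$ for which every intersection number $L\cdot C$ is even) realizes exactly this phenomenon. Your final paragraph names this obstacle honestly, but the proposed remedy --- passing to reducible configurations and exploiting the off-diagonal entries --- is not carried out, and it is precisely where the actual content of this direction of the theorem lies. As written, the hard implication is not proved.
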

\begin{remark}
To the best of our knowledge, we summarize the   known cases of BNC as follows:
\begin{enumerate}
	\item $X$ admits a surjective endomorphism that is not an isomorphism (cf. \cite[Proposition 2.1]{Bauer et al 2013}).
	\item The effective cone is polyhedral (cf. \cite[Proposition 1.1]{AL11}).
	\item $X$ satisfies the bounded cohomology property, i.e., there is a constant $c_X>0$ such that $h^1(\mathcal O_X(C))\le h^0(\mathcal O_X(C))$ for every curve $C$ on $X$ (cf. \cite[Proposition 14]{Ciliberto et al 2017}, \cite{Li21, Li23a, Li23b}).
\item  $X$ has $d(X)=1$ (cf. \cite[Theorem 2.3]{BPS17}, \cite{HPT15, Li19}).
The author of \cite{Li19} characterized smooth projective surfaces $X$ with the Picard number $\rho(X)=2$ and $d(X)=1$.
\end{enumerate}
\end{remark} 
It is well-known that the following SHGH conjecture implies Nagata's conjecture (cf. \cite[p. 772]{Nagata59}), which is motivated by Hilbert's 14th problem.
\begin{conjecture}
(cf. \cite[Conjecture 1.1, 2.3]{CHMR13})
Let $X$ be a composite of blow-ups of $\bP^n$ at points $p_1,\cdots, p_n$ in a very general position.
Then every negative curve on $X$ is a (-1)-rational curve.
\end{conjecture}
\begin{proposition}
\cite[Proposition 1.8]{Li19}
Let $X$ be a composite of blow-up of $\bP^n$ at   $n$ distinct points.
Then every negative curve $C$ has $C^2=-1$ if and only if $X$ has $d(X)=1$.
\end{proposition}
It is crucially important to classify smooth projective surfaces $X$ with $d(X)=1$ since it satisfies the BNC and will be used to solve the SHGH conjecture.

Below is a numerical characterization of smooth projective surfaces $X$ with $d(X)=1$.
\begin{theorem}
\label{MainThm}
(cf. Theorem \ref{Main-Thm})
Let $X$ be a smooth projective surface. 
Then every pseudo-effective integral divisor on $X$ has an integral Zariski decomposition if and only if the following statements hold.
\begin{itemize}
	\item[(a)] $C^2|(C\cdot D)$ for every negative curve $C$ and every curve $D$.
	\item[(b)] for any two curves $C_1$ and $C_2$, if their intersection matrix is negative definite, then $(C_1\cdot C_2)=0$. 
\end{itemize}	
\end{theorem}
\begin{remark}
In \cite{Bauer09}, Bauer demystified Zariski decompositions by reducing the whole process to basic linear algebra.
Indeed, the proof of Theorem  \ref{MainThm} is also the process of basic linear algebra.
\end{remark}
By Theorem \ref{MainThm}, to find more examples of smooth projective surfaces with $d(X)=1$, it seems very mysterious and highly constrained, but the SHGH conjecture points us toward one of the ultimate goals.
\begin{corollary}
Let $X$ be a composite of blow-ups of $\bP^n$ at $n$ distinct points.
If there is a negative curve $C$ and another curve $D$ such that $C^2 \nmid (C\cdot D)$, then the SHGH conjecture fails.
\end{corollary}
Finally, we show that every fiber of smooth projective fibered surfaces $X$ with $d(X)=1$ is irreducible as follows.
\begin{proposition}
\label{MainProp}
Let $X$ be a smooth projective surface with $d(X)=1$.
If $X$ admits a fibration $\pi: X\to B$ over a curve $B$, then every fiber is irreducible.
\end{proposition}
The paper is organized as follows.
Theorem \ref{MainThm} and Proposition \ref{MainProp} are proved in section \ref{The proof}.
In section \ref{application}, we collect some results of smooth projective surfaces $X$ with $d(X)=1$ which is motivated by \cite[Theorem 1.6]{Li19}.
In the last section, we give a remark on a question of Bauer et al \cite{Bauer et al 2013}.

{\bf Notation and Terminology.}
Let $X$ be a smooth projective surface over $\bC$.
\begin{itemize}
\item By a curve on $X$ we will mean a reduced and irreducible curve.
\item A negative curve on $X$ is a curve with negative self-intersection.
\item A prime divisor $C$ on $X$ is either a nef curve or a negative curve in which case that $h^0(\mathcal O_X(C))=1$.
\item A curve $C$ on $X$ is nef if $C\cdot P\ge0$ for every curve $P$ on $X$.
\item An integral pseudoeffective divisor $D$ on $X$ is linear equivalent to $\sum a_iC_i$ with each $a_i\in\bZ$ and each $C_i$ is a curve on $X$ and $D\cdot C\ge0$ for every nef curve $C$ on $X$.
\end{itemize}
{\bf Acknowledgments.} 
The author would like to thank Yi Gu, Feng Hao, and Xin L\"u for asking questions.
\section{A numerical characterization}
\label{The proof}
\begin{proposition}
\label{KeyProp}
Let $X$ be a smooth projective surface with $d(X) = 1$. Let $C_1, . . . , C_k$
be some curves on $X$ and $I(C_1, . . . , C_k)$ their intersection matrix. If $I(C_1,\cdots, C_k)$ is negative definite, then $I(C_1,\cdots, C_k)$ is a diagonal matrix. As a result, for every negative curve $C$ and every curve $D,$ we always have $C^2|(C\cdot D)$.	
\end{proposition}
\begin{proof}
To prove that $I(C_1, . . . , C_k)$ is a diagonal matrix, it suffices to show that $(C_i\cdot C_j ) =0$ for $i\ne j\in\{1,\cdots, k\}$.
Now assume that $I(C_i, C_j)$ is negative definite. 
Take a very ample divisor $H$ on $X$.
And take a curve $D\in |kH+C|$ for $k\gg 0$.
Note that the intersection matrix of $C_i$ and either $D$ or $H$ is not negative definite.
Then by \cite[Lemma 2.3]{Li19}, $C_i^2|(k(H\cdot C_i)+(C_i\cdot C_j))$ and $C_i^2|(H\cdot C_i)$.
So $C_i^2|(C_i\cdot C_j)$.
We also have $C_j^2|(C_i\cdot C_j)$ since $I(C_i, C_j)$ is negative definite.
If $(C_i\cdot C_j)>0$, then $(C_i^2)(C_j^2)-(C_i\cdot C_j)^2<0$.
But $I(C_i, C_j)$ is negative definite.
This leads to a contradiction.
So $(C_i\cdot C_j)=0$.
The last statement follows from \cite[Lemma 2.3]{Li19}.
\end{proof}
\begin{theorem}
\label{Main-Thm}
Let $X$ be a smooth projective surface. 
Then $X$ has $d(X)=1$ if and only if the following statements hold.
\begin{itemize}
	\item[(a)] $C^2|(C\cdot D)$ for every negative curve $C$ and every curve $D$.
	\item[(b)] If the intersection matrix of any two curves $C_1$ and $C_2$ is negative definite, then $(C_1\cdot C_2)=0$. 
\end{itemize}	
\end{theorem}
\begin{proof}
Suppose $d(X)=1$.
Then the proof follows from Proposition \ref{KeyProp}.
Conversely, assume that (a) and (b) hold.
Let the pseudoeffective integral divisor $D$ be written as $\sum b_iD_i$ with each $b_i\in \bZ$ and each $D_i$ is a curve on $X$.
The Zariski-Fujita decomposition states that $D$ can be written uniquely as a sum
$$
       D=P+N
$$
of $\bQ$-divisors such that 
\begin{enumerate}
\item[(i)] $P$ is nef,
\item[(ii)] $N$ is effective and has negative definite intersection matrix if $N\ne0$.
\item[(iii)] $P\cdot C=0$ for every component $C$ of $N$.
\end{enumerate}
Let $N=\sum_{i=1}^k a_iC_i$ with $(D\cdot C_i)<0$ for each $i\in\{1,\cdots, k\}$.
They are given as the unique solution of the system of equations
$$
 D\cdot C_j=(P+\sum_{i=1}^k a_iC_i)\cdot C_j=a_jC_j^2 \text{ for all }  j \in\{1,\cdots, k\}.
$$
Then $a_j=\frac{(D\cdot C_j)}{C_j^2}\in\bZ_{>0}$ for all $j\in \{1,\cdots, k\}$.
As a result, $d(X)=1$.
\end{proof}
\begin{proof}[Proof of Proposition \ref{MainProp}]
Let $F$ be a fiber of $\pi$.
We first assume that $F=a_1C_1+a_2C_2$ with $a_1, a_2>0$.
By Zariski's lemma, the intersection matrix of $C_1$ and $C_2$ is negative semi-definite, $C_1^2<0$ and $C_2^2<0$.
Then $(C_1^2\cdot C_2^2)-(C_1\cdot C_2)^2\ge0$ and $(C_1\cdot C_2)\ge\max\{-C_1^2,-C_2^2\}$ by Theorem \ref{Main-Thm} (a).
So $(C_1\cdot C_2)=-C_1^2=-C_2^2>0$.
Then $F^2=(a_1+a_2)^2(C_1\cdot C_2)>0$.
This leads to a contradiction.
Now assume $F=\sum_{i\in I} a_iC_i$ has at most three components.
Then by Zariski's lemma, we know that $I(C_1, C_2)$ is negative definite for $i\ne j\in I$.
So by Proposition \ref{KeyProp}, $(C_i\cdot C_j)=0$ for $i\ne j\in I$.
Then $F^2=\sum a_i^2C_i^2<0$.
This leads to a contradiction.
\end{proof}
\section{Serveral auxiliary results}
\label{application}
Let $X$ be a smooth projective surface.
It is well-known that $d(X)=1$ if every curve $C$ has $C^2\ge-1$ (cf. \cite{BPS17} and \cite[Proposition 1.2]{HPT15}).
Conversely, if $d(X)=1$, then is every negative curve a (-1)-curve?
The authors of \cite{HPT15} disproved this question by giving an example of a K3 surface $X$ with $d(X)=1, \rho(X)=2$ and two (-2)-rational curves (cf. see \cite[Claim 2.12]{Li19} for a related result).
\begin{question}
(cf. \cite[Question 2.15]{Li19})
Is there is a positive constant $\ell$ such that for any minimal smooth projective surface $X$ with $d(X)=1$ and every negative curve $C$ on $X$ which $C^2\ge-\ell$?
\end{question}
\begin{proposition}
If $X$ is a relatively minimal smooth projective surface with $\kappa(X)\le0$ and $d(X)=1$, then every negative curve $C$ on $X$ has $C^2\ge-2$.		
\end{proposition}
\begin{proof}
Let $C$ be a curve on $X$.
We first assume that $\kappa(X)=-\infty$.
Then by \cite[Claim 2.10]{Li19}, $X$ is either a ruled surface with invariant $e=1$ or a one point blow-up of $\bP^2$.
Notice that $X$ is a ruled surface with invariant $e=1$ since $X$ is relatively minimal.
This implies that $C^2\ge-1$.
Now assume that $\kappa(X)=0$.
Then $C^2\ge-2$ by the adjunction formula.
\end{proof}
\begin{proposition}
Let $X$ be a minimal smooth projective surface with $\kappa(X)=1$.
If $d(X)=1$, then every fiber of $\varphi_{|mK_X|}$ is irreducible.
In particular, if every fiber is of type $mI_0$, then every negative curve $C$ on $X$ has $g(C)\ge2$.		
\end{proposition}
\begin{remark}
In \cite[Claim 2.14]{Li19}, we assume that $\rho(X)=2$.
\end{remark}
\begin{proof}
By Proposition \ref{MainProp}, it suffices to show that every negative curve $C$ on $X$ has $g(C)\ge2$ if every fiber is of type $mI_0$.
Notice that $c_2(X)=0$ (cf. \cite[Lemma 2.13]{Li19}).
Then by \cite[Theorem 2.4]{Bauer et al 2013}, we have the following inequality:
$$
0<-C^2\le 2g(C)-2.
$$
So $g(C)\ge2$.
\end{proof}
\begin{proposition}
Let $X$ be a minimal smooth projective surface of general type with $\rho(X)=2$.
If $K_X^2\ge10$ and there is a negative curve $C$ such that $(K_X\cdot C)^2-K_X^2\cdot C^2$ is square-free, then the bicanonical map of $X$ is birational.	
\end{proposition}
\begin{proof}
By \cite[Theorem VII.7.4]{BHPV04}, the bicanonical map 
$$
  f_2: X\to \bP^{h^0(\mathcal O_X(2K_X))-1}
$$
is a morphism.
If $f_2$ is not birational and $K_X^2\ge10$, then by \cite[Theorem VII.5.1]{BHPV04}, there exists a morphism $\pi: X\to B$ whose general fiber $F$ is of genus two.
Assume $X$ has a negative curve $C$ such that $(K_X\cdot C)^2-K_X^2\cdot C^2$ is square-free.
Notice that $(F\cdot C), (F\cdot K_X)\in\bZ_{\ge0}$.
Since $\rho(X)=2$ and $K_X$ is ample, then there exists a sufficiently divisible integer $N$ such that
$$
   NF\equiv m_1K_X+m_2C, m_1, m_2\in\bZ.       
$$
Note that $F^2=0$ implies that 
$$
         K_X^2m_1^2+2(K_X\cdot C)m_1m_2+C^2m_2=0, f(\frac{m_1}{m_2}):=K_X^2(\frac{m_1}{m_2})^2+2(K_X\cdot C)\frac{m_1}{m_2}+C^2=0.
$$
Here, 
$$
         \sqrt{\Delta_f}=\sqrt{(K_X\cdot C)^2-K_X^2\cdot C^2}\in\bZ_{\ge0}.
$$
This leads to a contradiction.
\end{proof}
\section{Negative curves with higher genus}
 The well-known example of $\bP^2$ blown up at nine points shows that there are surfaces containing infinitely many (-1)-curves with genus zero.
Along similar lines, Bauer et al established the following result.
\begin{theorem}\label{Bauer-Thm}
 \cite[Theorems 4.1 and 4.3]{Bauer et al 2013} 
For either (i) each $m>0$ and $g=0$ or (ii)  each $m>1$ and each $g\ge0$, there are smooth projective surfaces containing infinitely many smooth irreducible $(-m)$-curves with genus $g$.	
\end{theorem}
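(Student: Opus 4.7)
The plan is to produce explicit families of surfaces in each regime via elliptic fibrations of positive Mordell-Weil rank: translating a non-torsion section by the Mordell-Weil group yields infinitely many sections, all smooth of the same genus and the same self-intersection. This aligns with, and generalises, the mechanism behind Theorem \ref{MainThm}(5).

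For regime (i) with $m=1$ and $g=0$, I would take $X$ to be the blow-up of $\bP^2$ at nine sufficiently general base points of a pencil of cubics. The induced map $X\to\bP^1$ is a rational elliptic surface whose Mordell-Weil group has rank $8$ (the $E_8$ lattice); each of the nine exceptional divisors is a section, hence a smooth rational $(-1)$-curve, and translating any one of them by infinitely many distinct elements of the Mordell-Weil group produces the required infinite family.

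For the remaining cases, namely both regimes with $m>1$ and arbitrary $g\ge0$, I would start from a rational elliptic surface $X_0\to\bP^1$ with a non-torsion section and base-change by a cover $B\to\bP^1$ of suitable degree from a smooth projective curve $B$ of genus $g$ (which exists for every admissible pair by standard constructions). After minimal resolution, the resulting elliptic fibration $f:X\to B$ inherits a non-torsion section from $X_0$, and by the canonical bundle formula $K_X\equiv f^*(K_B+M)$ together with adjunction on a section $S\cong B$, one obtains $S^2=-\chi(\mathcal O_X)$. Tuning the degree and the ramification of $B\to\bP^1$ so that $\chi(\mathcal O_X)=m$, the Mordell-Weil orbit of a non-torsion section then supplies infinitely many smooth $(-m)$-curves of genus $g$, exactly as in the proof of Theorem \ref{MainThm}(5).

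The main obstacle is the numerical bookkeeping of the base change: one must check that $\chi(\mathcal O_X)=m$ can be arranged for every prescribed pair $(g,m)$ in the two regimes, and that the pulled-back section remains of infinite order. The first is handled by a computation with the canonical bundle formula for elliptic fibrations, controlling $\deg M$ via the degree of $B\to\bP^1$ and the ramification profile over the singular fibers of $X_0$; the second is secured either by choosing $B\to\bP^1$ unramified over the bad fibers of $X_0$ (so that the Mordell-Weil group injects into that of $X$) or by a direct height-pairing argument on $X$.
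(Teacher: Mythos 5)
The paper offers no proof of this statement---it is quoted directly from \cite{Bauer et al 2013}---so there is nothing internal to compare against; your construction via elliptic fibrations of positive Mordell--Weil rank and base change is correct and is essentially the argument of the cited source (and the same mechanism as Lemma~\ref{lem-infinite} in this paper). The one point requiring care, which you do address, is taking the degree-$m$ cover $B\to\bP^1$ branched away from the singular fibres of $X_0$, so that the fibre product is already smooth and relatively minimal, $\chi(\mathcal O_X)=m\cdot\chi(\mathcal O_{X_0})=m$, and the (infinitely many) translates of the non-torsion section are smooth genus-$g$ curves of self-intersection $-\chi(\mathcal O_X)=-m$.
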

Motivated by Theorem \ref{Bauer-Thm}, Bauer et al asked the following question.
\begin{question}
\cite[Question 4.4]{Bauer et al 2013}
\label{MainQue}
Is there for each $g>1$ a surface with infinitely many $(-1)$-curves of genus $g$?
\end{question}
\begin{remark}
The author showed in \cite[Proposition 2.5]{Li19} that Question \ref{MainQue} has an affirmative answer for an elliptic surface $X$ with infinitely many sections and $d(X)=1$.
\end{remark}
Let $f: X\to B$ be an elliptic surface whose generic fiber $E/K$ (an elliptic curve $E$ over the function field $K=k(B)$).
By \cite[Proposition 5.4]{SS19}, the sections of an elliptic surface $f: S\to B$ are in a natural one-to-one correspondence with $K$-rational points $P$ of $E$.
We say a section $C$ of $f: S\to B$ is non-torsion if $P$ is a non-torsion point.
As a result, $X$ has infinitely many sections if it has a non-torsion section.
This motivates us to answer Question \ref{MainQue} in some sense.
\begin{proposition}
Let $f: X\to B$ be an elliptic surface over a smooth projective curve $B$ of genus $g(B)=g$.
If $X$ has a non-torsion section $C$ and $\chi(\mathcal O_X)=1$, then $X$ has infinitely many $(-1)$-curves of fixed genus $g$.	
\end{proposition}
\begin{proof}
 By \cite[Corollaries 5.45 and  5.50]{SS19}, $C^2=-\chi(\mathcal O_X)=-1$.
Notice that $g(C)=g(B)$ as $C$ is a section of $f$.
Therefore, $X$ has infinitely many $(-1)$-curves of fixed genus $g$ as $C$ is a non-torsion section.
\end{proof}
It is unknown to us that there exists an example of elliptic surfaces $X$ over a curve $B$ of genus $g(B)=g$ such that $X$ has a non-torsion section and $\chi(\mathcal O_X)=1$.
So to completely answer Question \ref{MainQue}, we end by asking the following question.
\begin{question}
Is there  an example of an elliptic surface $X$ over a smooth projective curve $B$ of genus $g>1$ such that $X$ has a non-torsion section and $\chi(\mathcal O_X)=1$?
\end{question}

\end{document}